\newcommand{\IR}{\mathbb R}
\newcommand{\IZ}{\mathbb Z}
\newcommand{\IT}{\mathbb T}
\newcommand{\C}{\mathcal C}
\newcommand{\w}{\omega}
\newtheorem{theorem}{Theorem}[section]
\newtheorem{problem}[theorem]{Problem}
\newtheorem{proposition}[theorem]{Proposition}
\newtheorem{lemma}[theorem]{Lemma}
\newtheorem{example}[theorem]{Example}
\title{Weak completions of paratopological groups}
\dedicatory{This article is respectfully dedicated to Professor Jerzy Mioduszewski on the occasion of his 93rd anniversary}
\author{Taras Banakh}
\address{T.Banakh: Ivan Franko National University of Lviv (Ukraine) and 
Jan Kochanowski University in Kielce (Poland)}
\email{t.o.banakh@gmail.com}
\author{Mikhail Tkachenko}
\address{M.Tkachenko: Department of Mathematics, Universidad Aut\'onoma Metropolitana,
Av. San Rafael Atlixco 186, Col. Vicentina, C.P. 09340, Iztapalapa, Mexico City, Mexico}
\email{mich@xanum.uam.mx}
\keywords{Paratopological group, pseudocompact, precompact, Ra\u{\i}kov completion, Bicompletion}
\subjclass[2010]{22A15, 54D35, 54H11}
\thanks{The second author was supported by grant number CAR-64356 of the Program \lq\lq{Ciencia de Frontera 2019\rq\rq} of the CONACyT, Mexico}
\begin{document}
\begin{abstract} 
Given a $T_0$ paratopological group $G$ and a class $\C$ of continuous homomorphisms of 
paratopological groups, we define the \emph{$\C$-semicompletion} $\C[G)$ and 
\emph{$\C$-completion} $\C[G]$ of the group $G$ that contain $G$ as a dense subgroup, satisfy the  
$T_0$-separation axiom 
and have certain universality properties. For special classes $\C$, we present some necessary and sufficient conditions on $G$ in order that the (semi)completions $\C[G)$ and  $\C[G]$ be Hausdorff.  Also, we give
an example of a Hausdorff paratopological abelian group $G$ whose $\C$-semicompletion $\C[G)$ fails 
to be a $T_1$-space, where $\C$ is the class of continuous homomorphisms of sequentially compact
topological groups to paratopological groups. In particular, the group $G$ contains an $\omega$-bounded 
sequentially compact subgroup $H$ such that $H$ is a topological group but its closure in $G$ fails to be 
a subgroup.
\end{abstract}
\maketitle

\section{Introduction}

It is well known \cite[\S3.6]{AT} that each topological group $G$ has the Ra\u{\i}kov completion, 
$\varrho{G}$, which coincides with the completion of $G$ with respect to the two-sided group 
uniformity of the group. The Ra\u{\i}kov completion has a nice categorial characterization. It turns 
out that $\varrho{G}$ is a unique topological group, up to topological isomorphism, that contains 
$G$ as a dense subgroup and has the following extension property: {\em Every continuous 
homomorphism $h\colon X\to G$ defined on a dense subgroup $X$ of a topological group 
$\widetilde{X}$ admits a unique extension to a continuous homomorphism $\tilde{h}\colon
\widetilde{X}\to\varrho{G}$} (see \cite[Proposition~3.6.12]{AT}). A topological group $G$ is 
{\em Ra\u{\i}kov complete} if and only if it is complete in its two-sided uniformity if and only 
if $G=\varrho{G}$.

In \cite{MR} and \cite{BR}, it is shown that a kind of the Ra\u{\i}kov completion can also be 
defined for paratopological groups. By a \emph{paratopological group} we understand 
a group $G$ endowed with a topology $\tau$ making the group multiplication $G\times G\to G$, 
$\langle x,y\rangle\to xy$ continuous. If, in addition, the inversion $G\to G$, $x\mapsto x^{-1}$, 
is continuous, then $\langle G,\tau\rangle$ is a topological group. A topology $\tau$ on a group 
$G$ is called a {\em topological group topology} on $G$ if $\langle G,\tau\rangle$ is a topological group.

Let us recall that a topology $\tau$ on a set $X$ satisfies the $T_0$ separation axiom or, equivalently, 
$\tau$ is a {\em $T_0$-topology} if for any distinct points $x,y\in G$ there exists an open set $U\in\tau$ 
such that $U\cap\{x,y\}$ is a singleton. It is well known, on the one hand, that each topological group 
satisfying the $T_0$ separation axiom  is automatically Tychonoff. On the other hand, the topology 
$\tau=\{\emptyset,\mathbb R\}\cup\{(a,\infty): a\in\IR\}$ turns the additive group of the reals into a 
$T_0$ paratopological group which does not satisfy the $T_1$ separation axiom. So \lq{$T_0$ does
not imply $T_1$\rq} in paratopological groups. From now on we assume that {\em all paratopological 
groups considered here satisfy the $T_0$ separation axiom}. 

Each paratopological group $\langle G,\tau\rangle$ admits a stronger topological group topology 
$\tau^\sharp$, whose neighborhood base at the identity $e$ consists of the set $U\cap U^{-1}$ where 
$e\in U\in\tau$. So, $\langle G,\tau^\sharp\rangle$ is topologically isomorphic to the subgroup $\{\langle x,x^{-1}\rangle: x\in G\}$ of the product group $G\times G$. The topological group $G^\sharp=\langle G,\tau^\sharp\rangle$ is called the {\em group coreflection} of the paratopological group $G=\langle G,\tau\rangle$. If the topology $\tau$ satisfies the $T_0$ separation axiom, then the topological group topology $\tau^\sharp$
is Hausdorff and hence Tychonoff. A paratopological group $\langle G,\tau\rangle$ is a topological group 
if and only if $\tau=\tau^\sharp$. A paratopological group $G$ is called {\em $\sharp$-complete} if the 
topological group $G^\sharp$ is Ra\u{\i}kov complete. The Sorgenfrey line $\mathbb{S}$ is an easy 
example of a $\sharp$-complete paratopological group since $\mathbb{S}^\sharp$ is the discrete
group of the reals.

A subset $D$ of a paratopological group $\langle G,\tau\rangle$ is called {\em $\sharp$-dense} in 
$G$ if $D$ is dense in $\langle G,\tau^\sharp\rangle$. Since $\tau\subseteq \tau^\sharp$, each 
$\sharp$-dense subset is dense (but not vice versa).

According to \cite{MR} or \cite{BR}, each paratopological group $G$ is a $\sharp$-dense subgroup of a 
$\sharp$-complete paratopological group $\breve{G}$ that has the following extension property: {\em any continuous homomorphism $h\colon X\to G$ defined on a $\sharp$-dense subgroup $X$ of a paratopological group $\widetilde{X}$ admits a unique extension to a continuous homomorphism $\tilde h\colon \widetilde{X}
\to \breve{G}$}. This extension property of $\breve{G}$ implies that the $\sharp$-complete paratopological 
group $\breve{G}$ containing $G$ as a $\sharp$-dense subgroup is unique up to a topological isomorphism. 
In \cite{BR} this unique paratopological group $\breve{G}$ is called the {\em Ra\u{\i}kov completion} of $G$ and in \cite{MR} it is called the {\em bicompletion} of $G$. By \cite{BR}, a neighborhood base at the identity $e$ of the paratopological group $\breve{G}$ consists of the sets $\overline{U}^\sharp$, where  $U$ is a neighborhood 
of $e$ in $G$ and $\overline{U}^\sharp$ is the closure of $U$ in the Ra\u{\i}kov completion $\varrho{G}^\sharp$ of the topological group $G^\sharp$. So as a group, $\breve{G}$ coincides with the Ra\u{\i}kov completion of $G^\sharp$. If $G$ is a topological group, then the Ra\u{\i}kov completion $\breve{G}$ of $G$ as a paratopological group coincides with the usual Ra\u{\i}kov completion $\varrho{G}$ of $G$.

 By \cite{BR}, \emph{for any regular paratopological group $G$, its Ra\u{\i}kov completion $\breve{G}$ 
 is regular and hence Tychonoff}  (for the latter implication, see \cite{BRT}). However, the Ra\u{\i}kov 
 completion of a Hausdorff paratopological group $G$ is not necessarily Hausdorff (see \cite[Example~2.9]{BR}). To bypass this pathology of $\breve{G}$, in this article we consider some weaker notions of completion of paratopological groups which preserve Hausdorffness in some special cases.

Our weaker notion of completion depends on a class $\mathcal{C}$ of continuous homomorphisms between paratopological groups. For example, $\mathcal{C}$ can be the class of all continuous homomorphisms  of topological groups (possessing some property like precompactness, pseudocompactness, countable compactness, sequential compactness, etc.) to paratopological groups. So, we fix a class $\mathcal{C}$ of continuous homomorphisms between paratopological groups. 

A paratopological group $\widetilde G$ is called a {\em $\mathcal{C}$-semicompletion} of a paratopological group $G$ if $G\subseteq\widetilde G$ and any homomorphism $h\colon X\to G$ in the class $\mathcal{C}$ 
has a unique extension $\breve{h}\colon \breve{X}\to\widetilde{G}$ to the Ra\u{\i}kov completion $\breve{X}$ 
of the paratopological group $X$. The extension property of the Ra\u{\i}kov completion $\breve{G}$ ensures 
that $\breve{G}$ is a $\mathcal{C}$-semicompletion of $G$. So every paratopological group  has at least 
one $\mathcal{C}$-semicompletion. 

A $\mathcal{C}$-semicompletion $\widetilde G$ of a paratopological group $G$ is called {\em minimal} if 
$\widetilde{G}=H$ for any subgroup $H\subseteq\widetilde{G}$ which is a $\mathcal{C}$-semicompletion 
of $G$. Every $\mathcal{C}$-semicompletion $\tilde G$ of a paratopological group $G$ contains a unique 
minimal $\mathcal{C}$-semicompletion of $G$, which is equal to the intersection $\bigcap \mathcal{S}$ 
of the family $\mathcal{S}$ of subgroups $H\subseteq\widetilde{G}$ such that $H$ is a $\mathcal{C}$-semicompletion of $G$. The Hausdorffness of the group coreflections implies that $\bigcap \mathcal{S}$ 
is indeed a $\mathcal{C}$-semicompletion of $G$. 

The {\em $\mathcal{C}$-semicompletion} $\mathcal{C}[G)$ of a paratopological group $G$ is defined 
to be the smallest $\mathcal{C}$-semicompletion of $G$ in its Ra\u{\i}kov completion $\breve{G}$. 
It is equal to the intersection $\bigcap\mathcal S$ of the family $\mathcal{S}$ of all groups $H$ with 
$G\subseteq H\subseteq \breve{G}$ such that $H$ is a $\mathcal{C}$-semicompletion of $G$. Let 
us note that the $\mathcal{C}$-semicompletion $\mathcal{C}[G)$ of $G$ is topologically isomorphic 
to any minimal $\mathcal{C}$-semicompletion  of $G$. Indeed, given a minimal 
$\mathcal{C}$-semicompletion $H$ of $G$, consider the Ra\u{\i}kov completion $\breve{H}$ of the paratopological group $H$. Since $G\subseteq H\subseteq\breve{H}$, the Ra\u{\i}kov completion 
$\breve{G}$ of $G$ can be identified with the $\sharp$-closure of $G$ in $\breve{H}$. Now the 
minimality of the $\mathcal{C}$-semicompletion $H$ implies that $H=H\cap \breve{G}\subseteq 
\breve{G}$ and, finally, $H=\mathcal{C}[G)$. 

Having defined the $\mathcal{C}$-semicompletions of paratopological groups, we can pose 
the following two general problems.

\begin{problem}\label{Pr1} 
Explore the categorial properties of $\mathcal{C}$-semicompletions.
\end{problem}

\begin{problem}\label{Pr2}
Characterize the paratopological groups $G$ for which the $\mathcal{C}$-semicompletion 
$\mathcal{C}[G)$ of $G$ is Hausdorff.
\end{problem}

Some partial answers to these problems will be given in Sections~\ref{s1} and~\ref{s2}.

Again, let $\C$ be a class of continuous homomorphisms of paratopological groups. 
Now we introduce the notion of a $\mathcal{C}$-completion of a paratopological group. 
A paratopological group $G$ is {\em $\mathcal{C}$-complete} if $G$ is a $\mathcal{C}$-semicompletion 
of $G$. Equivalently, a paratopological group $G$ is {\em $\mathcal{C}$-complete} if each homomorphism $h\colon X\to G$ in the class $\mathcal{C}$ extends to a continuous homomorphism $\breve{h}\colon
\breve{X}\to G$ defined on the Ra\u{\i}kov completion of $X$. By the extension property of Ra\u{\i}kov completions, each $\sharp$-complete paratopological group $G$ (hence, $\breve{G}$) is 
$\mathcal{C}$-complete. In particular, the Sorgenfrey line $\mathbb{S}$ is $\mathcal{C}$-complete.

By the {\em $\mathcal{C}$-completion}, $\mathcal{C}[G]$, of a paratopological group $G$ we understand 
the intersection $\bigcap\mathcal{S}$ of the family $\mathcal{S}$ of all $\mathcal{C}$-complete subgroups $H\subseteq\breve{G}$ that contain $G$. The Hausdorffness of the groups coreflections of paratopological groups implies that the $\mathcal{C}$-completion $\mathcal{C}[G]$ of any paratopological group $G$ is 
$\mathcal{C}$-complete and, hence, $\mathcal{C}[G)\subseteq \mathcal{C}[G]$.

It is clear from the above definitions that if $\mathcal{C}$ is the empty class (or, more generally,
every homomorphism $h\colon X\to G$ with $h\in \mathcal{C}$, if exists, is trivial), then $G$ is 
$\mathcal{C}$-complete and, therefore, $G=\mathcal{C}[G)=\mathcal{C}[G]$.

\begin{problem}\label{Pr3} 
Explore the categorial properties of the operation of $\mathcal{C}$-completion in the category 
of paratopological groups and their continuous homomorphisms.
\end{problem}

\begin{problem}\label{Pr4}
Characterize the paratopological groups $G$ such that the $\mathcal{C}$-completion $\mathcal{C}[G]$ 
of $G$ is Hausdorff.
\end{problem}

\begin{problem}\label{Pr5}
For which paratopological groups (and classes $\mathcal{C}$ of homomorphisms) do their 
$\mathcal{C}$-semicompletions coincide with their $\mathcal{C}$-completions?
\end{problem}

\section{Categorial properties of $\mathcal{C}$-completions and $\mathcal{C}$-semicompletions}\label{s1}

A class $\mathcal{C}$ of continuous homomorphisms between paratopological groups is called {\em composable} if  for any homomorphism $f\colon X\to Y$ in the class $\mathcal{C}$ and any continuous homomorphism $g\colon Y\to Z$ of paratopological groups, the composition $g\circ f$ is in
$\mathcal{C}$. For example, for any class $\mathcal{P}$ of paratopological groups, the class 
$\mathcal{C}$ of continuous homomorphisms $h\colon X\to Y$ between paratopological groups with 
$X\in\mathcal{P}$ is composable.

The following proposition shows that for a composable class $\mathcal{C}$, the constructions of 
$\mathcal{C}$-semicompletion and $\mathcal{C}$-completion are functorial in the category of 
paratopological groups and their continuous homomorphisms.

\begin{proposition}\label{Prop:1}
Let $\mathcal{C}$ be a composable class of continuous homomorphisms of paratopological 
groups. For any continuous homomorphism $h\colon X\to Y$ of paratopological groups, its 
continuous extension $\breve{h}\colon\breve{X}\to\breve{Y}$ satisfies $\breve{h}[\mathcal{C}[X)]
\subseteq \mathcal{C}[Y)$ and $\breve{h}[\mathcal{C}[X]]\subseteq \mathcal{C}[Y]$.
\end{proposition}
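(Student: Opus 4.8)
The plan is to prove both inclusions by the same device: pull back the target semicompletion (resp.\ completion) along $\breve h$ and show that the preimage is a semicompletion (resp.\ completion) of the source, so that the minimality built into $\mathcal C[X)$ (resp.\ $\mathcal C[X]$) forces it to lie inside that preimage. First I would record why $\breve h$ exists at all: since $h\colon X\to Y\subseteq\breve Y$ is a continuous homomorphism and $X$ is $\sharp$-dense in $\breve X$, the extension property of the Ra\u{\i}kov completion $\breve Y$ furnishes a unique continuous homomorphism $\breve h\colon\breve X\to\breve Y$ extending $h$. I will use throughout the uniqueness clause of this extension property, which guarantees that two continuous homomorphisms $\breve X\to\breve Y$ that agree on $X$ coincide, and more generally that two extensions $\breve Z\to\breve Y$ of a common map on $Z$ coincide.

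For the semicompletion inclusion, set $H=\breve h^{-1}[\mathcal C[Y)]$. Because $\breve h$ is a homomorphism and $\mathcal C[Y)$ is a subgroup of $\breve Y$, the set $H$ is a subgroup of $\breve X$; and since $h[X]\subseteq Y\subseteq\mathcal C[Y)$, we have $X\subseteq H$. It remains to verify that $H$ is a $\mathcal C$-semicompletion of $X$. Given any $f\colon Z\to X$ in $\mathcal C$, composability yields $h\circ f\colon Z\to Y$ in $\mathcal C$, so the semicompletion property of $\mathcal C[Y)$ provides an extension $\breve Z\to\mathcal C[Y)$ of $h\circ f$. On the other hand, $\breve h\circ\breve f\colon\breve Z\to\breve Y$ is a continuous homomorphism extending $h\circ f$, so by uniqueness these two extensions agree; hence $\breve h\circ\breve f$ maps $\breve Z$ into $\mathcal C[Y)$, that is, $\breve f[\breve Z]\subseteq\breve h^{-1}[\mathcal C[Y)]=H$. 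Thus $\breve f$ co-restricts to the required extension $\breve Z\to H$, showing that $H$ is a $\mathcal C$-semicompletion of $X$ with $X\subseteq H\subseteq\breve X$. By definition of $\mathcal C[X)$ as the smallest such group we get $\mathcal C[X)\subseteq H$, whence $\breve h[\mathcal C[X)]\subseteq\mathcal C[Y)$.

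For the completion inclusion I would run the analogous argument with $K=\breve h^{-1}[\mathcal C[Y]]$, again a subgroup of $\breve X$ containing $X$. The only change is in verifying $\mathcal C$-completeness of $K$: taking $f\colon Z\to K$ in $\mathcal C$, I compose with the continuous homomorphism $\breve h|_K\colon K\to\mathcal C[Y]$ to obtain $\breve h|_K\circ f\in\mathcal C$, apply $\mathcal C$-completeness of $\mathcal C[Y]$ to extend it to a map $\breve Z\to\mathcal C[Y]$, and once more invoke uniqueness to identify this extension with $\breve h\circ\breve f$; this forces $\breve f[\breve Z]\subseteq K$, so $K$ is $\mathcal C$-complete. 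Since $\mathcal C[X]$ is the intersection of all $\mathcal C$-complete subgroups of $\breve X$ containing $X$, we conclude $\mathcal C[X]\subseteq K$ and therefore $\breve h[\mathcal C[X]]\subseteq\mathcal C[Y]$.

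The routine verifications---that the preimages are subgroups, that $X$ lands in them, and that the co-restriction of a unique extension is still the unique extension into the smaller group---are straightforward. The one place demanding care is the interplay of the two hypotheses: composability is precisely what makes $h\circ f$ (resp.\ $\breve h|_K\circ f$) a member of $\mathcal C$, so that the defining extension property of $\mathcal C[Y)$ / $\mathcal C[Y]$ applies, while uniqueness of Ra\u{\i}kov extensions is what lets me equate that abstract extension with the concrete composite $\breve h\circ\breve f$. I expect this identification step to be the crux; everything else is bookkeeping.
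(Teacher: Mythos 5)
Your proof is correct and follows essentially the same route as the paper's: pull back $\mathcal{C}[Y)$ (resp.\ $\mathcal{C}[Y]$) along $\breve{h}$, show the preimage is a $\mathcal{C}$-semicompletion of $X$ (resp.\ $\mathcal{C}$-complete) using composability and the identification of the two extensions of $h\circ f$, then invoke minimality of $\mathcal{C}[X)$ (resp.\ $\mathcal{C}[X]$). The uniqueness of extensions you cite is exactly what the paper justifies by the Hausdorffness of the group coreflection of $\breve{Y}$, so the two arguments coincide in substance.
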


\begin{proof} 
To see that $\breve{h}[\mathcal{C}[X)]\subseteq \mathcal C[Y)$, it suffices to check that the preimage 
$\breve{h}^{-1}[\mathcal C[Y)]$ is a $\C$-semicompletion of $X$. Given any homomorphism 
$f\colon Z\to X$ in the class $\C$, consider its continuous homomorphic extension $\breve{f}\colon
\breve{Z}\to\breve{X}$. Then $\breve{h}\circ\breve{f}\colon\breve{X}\to \breve{Y}$ is a continuous extension 
of the homomorphism $h\circ f\colon Z\to Y$. Taking into account that $\C[Y)$ is a $\C$-semicompletion of 
$Y$ and the topology of the group reflection of $\breve{Y}$ is Hausdorff, we conclude that $(\breve{h}\circ 
\breve{f})[\breve{Z}]\subseteq\C[Y)$ and hence $\breve{f}[\breve{Z}]\subseteq \breve{h}^{-1}[\C[Y)]$. This implies that $\breve{h}^{-1}[\C[Y)]$ is a $\C$-semicompletion of $X$ and $\C[X)\subseteq \breve{h}^{-1}[\C[Y)]$, by the minimality of $\C[X)$.

Next, we show that the preimage $\breve{h}^{-1}[\C[Y]]$ is a $\C$-complete paratopological group. Given 
any homomorphism $f\colon Z\to \breve{h}^{-1}[\C[Y]]$ in the class $\C$, consider its unique continuous extension $\breve{f}\colon \breve{Z}\to \breve{X}$. Then $\breve{h}\circ\breve{f}\colon\breve{Z}\to\breve{Y}$ 
is a unique continuous extension of the homomorphism $h\circ f\colon Z\to \C[Y]$. Taking into account that 
the paratopological group $\C[Y]$ is $\C$-complete and the topology of the group reflection of $\breve{Y}$ 
is Hausdorff, we conclude that $(\breve{h}\circ \breve{f})[\breve{Z}]\subseteq\C[Y]$, which implies the 
inclusion $\breve{f}[\breve{Z}]\subseteq \breve{h}^{-1}[\C[Y]]$. Therefore, the paratopological group 
$\breve{h}^{-1}[\C[Y]]$ is $\C$-complete and $\C[X]\subseteq\breve{h}^{-1}[\C[Y]]$, by the minimality 
of $\C[X]$. 
\end{proof}

\section{The Hausdorff property of $\mathcal{C}$-completions of paratopological groups}\label{s2}
In this section, we present some necessary and some sufficient conditions on a paratopological
group $G$ in order that the $\mathcal{C}$-semicompletion $\mathcal{C}[G)$ or $\C$-completion 
$\mathcal{C}[G]$ of $G$ be Hausdorff. Let us recall that for a regular paratopological group $G$, 
its Ra\u\i kov completion $\breve G$ is regular and so are the paratopological groups $\C[G)$ and 
$\C[G]$.

\begin{proposition}\label{p1} 
The $\mathcal{C}$-completion $\mathcal{C}[G]$ of a paratopological group $G$ is Hausdorff if 
and only if $G$ is a subgroup of a Hausdorff $\mathcal{C}$-complete paratopological group.
\end{proposition}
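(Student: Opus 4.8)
The plan is to prove both implications by playing off a single Hausdorff $\C$-complete group sandwiched between $G$ and $\breve G$. The forward implication is immediate: if $\C[G]$ is Hausdorff, then $\C[G]$ itself is a Hausdorff $\C$-complete paratopological group (its $\C$-completeness was recorded above) containing $G$ as a subgroup, and there is nothing more to prove.

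For the nontrivial implication, I would start from a subgroup inclusion $G\subseteq K$ into a Hausdorff $\C$-complete paratopological group $K$ and pass to Ra\u\i kov completions. Since $G\subseteq K\subseteq\breve K$, the completion $\breve G$ can be identified with the $\sharp$-closure of $G$ in $\breve K$, as recalled in the introduction; in particular $\breve G\subseteq\breve K$ and, being $\sharp$-complete, $\breve G$ is $\sharp$-closed in $\breve K$. I then set $H=\breve G\cap K$ and note that $G\subseteq H\subseteq\breve G$ and that $H$, as a subgroup of the Hausdorff group $K$, is Hausdorff. The aim is to show that $H$ is $\C$-complete, for then $H$ belongs to the family $\mathcal S$ whose intersection defines $\C[G]$, whence $\C[G]\subseteq H\subseteq K$ and $\C[G]$ inherits Hausdorffness from $K$.

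To verify the $\C$-completeness of $H$, I would take any homomorphism $h\colon X\to H$ in $\C$ and consider the unique continuous extension $\breve h\colon\breve X\to\breve K$ of $h$, regarded as a map into the $\sharp$-complete group $\breve K$. On one hand, continuity of the induced map $\breve X^\sharp\to\breve K^\sharp$ of group coreflections, together with the $\sharp$-density of $X$ in $\breve X$, forces $\breve h[\breve X]\subseteq\overline{h[X]}^{\sharp}\subseteq\breve G$, the last inclusion holding because $h[X]\subseteq\breve G$ and $\breve G$ is $\sharp$-closed in $\breve K$. On the other hand, regarding $h$ as a homomorphism into $K$ (here the composability of $\C$, as in Section~\ref{s1}, keeps it in the class) and invoking the $\C$-completeness of $K$, the same extension $\breve h$ satisfies $\breve h[\breve X]\subseteq K$; uniqueness of continuous extensions, guaranteed by the Hausdorffness of the group coreflection of $\breve K$, shows these two extensions coincide. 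Combining the inclusions gives $\breve h[\breve X]\subseteq\breve G\cap K=H$, so $h$ extends to $\breve X\to H$ and $H$ is $\C$-complete.

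The step I expect to be the main obstacle is exactly this verification that $H=\breve G\cap K$ is $\C$-complete: one must produce the extension $\breve h$ independently as a map into $\breve G$ and as a map into $K$, and then glue them via the uniqueness of continuous extensions into the Hausdorff coreflection of $\breve K$. The passage from $h\colon X\to H$ to a homomorphism into $K$ lying in $\C$ is the point where a closure property of $\C$ under post-composition is used; for the domain-determined classes considered in this paper this composability holds automatically, so the argument applies to the classes of interest.
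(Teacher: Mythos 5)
Your proof is correct and follows essentially the same route as the paper's: the paper likewise passes to $\breve G\subseteq\breve K$, shows that $\breve G\cap K$ is a $\C$-complete subgroup of $\breve G$ containing $G$, and concludes that $\C[G]\subseteq\breve G\cap K\subseteq K$ is Hausdorff. The only difference is one of detail: the paper asserts the $\C$-completeness of $\breve G\cap K$ in a single line (from the $\C$-completeness of $\breve G$ and $K$), whereas you spell out the two extensions, glue them via uniqueness over the Hausdorff coreflection of $\breve K$, and explicitly flag the post-composition (composability) property of $\C$ that this step quietly uses.
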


\begin{proof} 
The ``only if'' part is trivial. To prove the ``if'' part, assume that $G$ is a subgroup of a Hausdorff 
$\mathcal{C}$-complete paratopological group $H$. Taking into account that $G\subseteq H$ 
and $G^\sharp\subseteq H^\sharp$, we conclude that $\breve{G}\subseteq\breve{H}$. The 
$\mathcal{C}$-completeness of the paratopological groups $\breve{G}$ and $H$ implies that 
the paratopological group $\breve{G}\cap H$ is $\mathcal{C}$-complete and hence 
$\mathcal{C}[G]\subseteq \breve{G}\cap H$ is Hausdorff, being a subgroup of the Hausdorff 
paratopological group $H$.
\end{proof}

Let $\mathcal{P}$ be a property. 
A paratopological group $H$ is said to be \emph{projectively 
$\mathcal{P}$} if every neighborhood of the identity element in $H$ contains a closed invariant 
(equivalently, ``normal'' in the algebraic sense) 
subgroup $N$ such that the quotient paratopological group $H/N$ has $\mathcal{P}$. Similarly,
a paratopological group $X$ is said to be {\em projectively $\Psi$} if for every neighborhood $U$ 
of the identity in $X$, there exists a continuous homomorphism $h\colon X\to Y$ to a Hausdorff 
paratopological group of countable pseudocharacter such that $h^{-1}(e_Y)\subseteq U$; here 
$e_Y$ denotes the identity of the group $Y$. 

It is easy to see that the projectively $\Psi$ 
paratopological groups are exactly the projectively $\mathcal{P}$ groups, where $\mathcal{P}$ 
is the property of being a Hausdorff space of countable pseudocharacter. Indeed, let $G$ be a 
projectively $\Psi$ paratopological group and $U$ be a neighborhood of the identity in $G$.
By our assumption, there exists a continuous homomorphism $h\colon G\to H$ onto a Hausdorff 
paratopological group $H_\alpha$ of countable pseudocharacter such that $h^{-1}(e_H)\subseteq U$, 
where $e_H$ is the identity of $H$. Let $N=h^{-1}(e_H)$ be the kernel of $h$ and $p\colon G\to G/N$ 
be the quotient homomorphism. Clearly there exists a continuous one-to-one homomorphism 
$j\colon G/N\to H$ satisfying $h=j\circ p$. Thus $j$ is a continuous bijection of $G/N$ onto $H$ 
and, hence, the quotient group $G/N$ is Hausdorff and has countable pseudocharacter. Therefore, 
$G$ is projectively $\mathcal{P}$. The inverse implication is evident. 

According to \cite[Proposition~2]{ST} every projectively Hausdorff paratopological group is
Hausdorff. In particular, all projectively $\Psi$ paratopological groups are Hausdorff. This
fact also follows from the characterization of projectively $\Psi$ groups presented in the
next lemma.

\begin{lemma}\label{Le:Evid}
A paratopological group $G$ is projectively $\Psi$ if and only if $G$ is topologically isomorphic
 to a subgroup of a product of Hausdorff paratopological groups of countable 
pseudocharacter.
\end{lemma}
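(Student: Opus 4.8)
The plan is to prove the two implications separately, reducing the embedding into a product to the statement that the topology of $G$ coincides with the initial topology induced by a suitable family of quotient homomorphisms. The ``if'' part is routine, so I would dispose of it first. Assume $G$ is (topologically isomorphic to) a subgroup of a product $\prod_{i\in I}Y_i$ of Hausdorff paratopological groups $Y_i$ of countable pseudocharacter, and identify $G$ with its image. Given a neighborhood $U$ of $e$ in $G$, the subspace topology provides a finite set $F\subseteq I$ and open sets $O_i\ni e_{Y_i}$, $i\in F$, with $G\cap\bigcap_{i\in F}\pr_i^{-1}(O_i)\subseteq U$. I would then take $h\colon G\to\prod_{i\in F}Y_i$ to be the restriction of the projection onto the finite subproduct. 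The group $\prod_{i\in F}Y_i$ is again a Hausdorff paratopological group of countable pseudocharacter (a finite union of countable pseudobases at the identities of the factors yields a countable pseudobase at the identity of the product), $h$ is a continuous homomorphism, and $h^{-1}(e)=G\cap\bigcap_{i\in F}\pr_i^{-1}(\{e_{Y_i}\})\subseteq U$ since $e_{Y_i}\in O_i$. Hence $G$ is projectively $\Psi$.

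For the ``only if'' part I would use the reformulation established just before the lemma: since $G$ is projectively $\Psi$, for every open neighborhood $U$ of $e$ there is a closed invariant subgroup $N_U\subseteq U$ such that the quotient $G/N_U$ is a Hausdorff paratopological group of countable pseudocharacter. Let $q_U\colon G\to G/N_U$ denote the corresponding (open) quotient homomorphism, and consider the diagonal homomorphism $\Delta\colon G\to\prod_U G/N_U$, $\Delta(x)=(q_U(x))_U$, the product being taken over the set of all open neighborhoods $U$ of $e$. Then $\Delta$ is a continuous homomorphism, and it is injective: its kernel is $\bigcap_U N_U\subseteq\bigcap_U U$, and since every projectively $\Psi$ group is Hausdorff, this last intersection equals $\{e\}$.

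It remains to verify that $\Delta$ is a topological embedding, i.e. that the topology $\tau_G$ of $G$ agrees with the initial topology $\tau$ induced by the family $\{q_U\}$. The inclusion $\tau\subseteq\tau_G$ is immediate from continuity of the $q_U$. For the reverse inclusion it suffices, by homogeneity, to trap an arbitrary open neighborhood $U$ of $e$ between preimages of product-open sets. This is the heart of the matter and the only delicate point: the definition supplies merely $N_U\subseteq U$, which is too weak to pull an entire neighborhood of the identity of $G/N_U$ back into $U$. I would circumvent this by invoking continuity of the multiplication to choose an open $V\ni e$ with $VV\subseteq U$, and then applying the reformulation to $V$, obtaining a closed invariant $N_V\subseteq V$ with $G/N_V$ Hausdorff of countable pseudocharacter. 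Because $q_V$ is open, $q_V(V)$ is an open neighborhood of the identity in $G/N_V$, and $q_V^{-1}(q_V(V))=VN_V\subseteq VV\subseteq U$, the first containment holding since $N_V\subseteq V$. Thus $\Delta^{-1}\bigl(\pr_V^{-1}(q_V(V))\bigr)=VN_V$ is a $\tau$-neighborhood of $e$ contained in $U$, which gives $\tau_G\subseteq\tau$. Consequently $\Delta$ is a topological isomorphism of $G$ onto the subgroup $\Delta[G]$ of a product of Hausdorff paratopological groups of countable pseudocharacter, as required. The implication $VV\subseteq U\Rightarrow VN_V\subseteq U$ is precisely where the smaller neighborhood absorbs the kernel, and I expect this absorption step to be the main obstacle to carry out correctly.
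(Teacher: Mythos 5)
Your proposal is correct and follows essentially the same route as the paper's proof: a diagonal product of quotient homomorphisms $G\to G/N$, injectivity via Hausdorffness of projectively $\Psi$ groups, and the embedding secured by exactly the paper's absorption trick (choose $V$ with $V^2\subseteq U$ and a kernel $N\subseteq V$, so that $VN\subseteq V^2\subseteq U$, using openness of the quotient map). The only cosmetic differences are that you invoke the quotient reformulation stated before the lemma and index over all neighborhoods of $e$, whereas the paper factors the given homomorphisms $f_\alpha$ through $G/N_\alpha$ inline and indexes over a neighborhood base.
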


\begin{proof}
The sufficiency is evident, so we verify only the necessity. Assume that $G$ is a projectively 
$\Psi$ paratopological group. Applying \cite[Proposition~2]{ST} we conclude that $G$ is 
Hausdorff. Let $\{U_\alpha: \alpha\in A\}$ be a neighborhood base at the identity $e$ of $G$. 
For every $\alpha\in A$, take an open neighborhood $V_\alpha$ of $e$ in $G$ such that
$V_\alpha^2\subset U_\alpha$. By our assumption, there exists a continuous homomorphism 
$f_\alpha$ of $G$ onto a Hausdorff paratopological group $H_\alpha$ of countable pseudocharacter 
such that $f_\alpha^{-1}(e_\alpha)\subseteq V_\alpha$, where $e_\alpha$ is the identity of $H_\alpha$. 
Let $N_\alpha$ be the kernel of $f_\alpha$ and $p_\alpha\colon G\to G/N_\alpha$ be the quotient
homomorphism. Clearly there exists a continuous one-to-one homomorphism $j_\alpha\colon
G/N_\alpha\to H_\alpha$ satisfying $f_\alpha=j_\alpha\circ p_\alpha$. Thus $j_\alpha$ is a
continuous bijection of $G/N_\alpha$ onto $H_\alpha$ and, hence, the quotient group $G/N_\alpha$
is Hausdorff and has countable pseudocharacter. 

Denote by $p$ the diagonal product of the family $\{p_\alpha: \alpha\in A\}$. Then $p$ is a continuous 
homomorphism of $G$ to $P=\prod_{\alpha\in A} G/N_\alpha$. We claim that $p$ is an isomorphic 
topological embedding. First, the kernel of $p$ is trivial. Indeed, if $x\in G$ and $x\neq e_G$, take a
neighborhood $U$ of $e_G$ such that $x\notin U$. There exists $\alpha\in A$ such that 
$p_\alpha^{-1}(e_\alpha)\subseteq U_\alpha \subseteq U$, whence it follows that $p_\alpha(x)\neq 
e_\alpha$. Hence $p(x)\neq e_H$ and $p$ is injective.

Further, let $U$ be an arbitrary neighborhood of $e$ in $G$. Take $\beta\in A$ with 
$U_\beta\subseteq U$. Then $f_\beta^{-1}f_\beta(V_\beta)=V_\beta N_\beta\subseteq
V_\beta^2\subseteq U_\beta\subseteq U$. Since $j_\beta$ is one-to-one, it follows from 
$f_\beta=j_\beta\circ p_\beta$ that $p_\beta^{-1}p_\beta(V_\beta)=
f_\beta^{-1}f_\beta(V_\beta)\subseteq U$. Thus the open neighborhood $p_\beta(V_\beta)$
of the identity in $H_\beta$ satisfies $p_\beta^{-1}p_\beta(V_\beta)\subseteq U$. Denote 
by $\pi_\beta$ the projection of $\prod_{\alpha\in A} G/N_\alpha$ to the factor $G/N_\beta$.
Applying the equality $p_\beta=\pi_\beta\circ p$ we deduce that the open neighborhood 
$W=p(G)\cap \pi_\beta^{-1} p_\beta(V_\beta)$ of the identity in $p(G)$ satisfies 
$p^{-1}(W)=p_\beta^{-1}(p_\beta(V_\beta))\subseteq U$. We have thus proved that for 
every neighborhood $U$ of the identity in $G$, there exists a neighborhood $W$ of
the identity in $p(G)$ such that $p^{-1}(W)\subseteq U$. This property of the continuous
monomorphism $p$ implies that $p$ is an isomorphic topological embedding of $G$ 
into $\prod_{\alpha\in A} G/N_\alpha$ (see also \cite[Section~3.4]{AT}).
\end{proof}

A paratopological group $G$ is called \emph{$\w$-balanced} if for any neighborhood $U$ of the 
identity in $G$, there exists a countable family $\mathcal V$ of neighborhoods of the identity in 
$G$ such that for each $x\in G$, one can find $V\in\mathcal V$ satisfying $xVx^{-1}\subseteq U$.

The \emph{Hausdorff number} $Hs(G)$ of a Hausdorff paratopological group $G$ is the smallest  
cardinal $\kappa\geq 1$ such that for every neighborhood $U\subseteq G$ of the identity in $G$, 
there exists a family $\{V_\alpha\}_{\alpha\in\kappa}$ of neighborhoods of the identity in $G$ such 
that $\bigcap_{\alpha\in\kappa}V_\alpha V_\alpha^{-1}\subseteq U$. The Hausdorff number was 
introduced and studied in \cite{Tka09}.

According to \cite[Theorem~2.7]{Tka09}, every $\w$-balanced paratopological group $G$ with
$Hs(G)\leq\omega$ is a subgroup of a Tychonoff product of first-countable Hausdorff paratopological 
groups. By Lemma~\ref{Le:Evid}, the latter implies that the $\w$-balanced paratopological groups with
countable Hausdorff number are projectively $\Psi$.

\begin{theorem}\label{Th:3.3}
Let $\mathcal{C}$ be a subclass of the class of continuous homomorphisms from pseudocompact 
topological groups to paratopological groups. If a paratopological group $G$ is projectively $\Psi$, 
then its $\mathcal{C}$-completion $\mathcal{C}[G]$ is Hausdorff. If $G$ is $\w$-balanced and satisfies 
$Hs(G)\le \w$, then the paratopological group $\mathcal{C}[G]$ is $\w$-balanced, Hausdorff, 
and  satisfies $Hs(\mathcal{C}[G])\le\w$.
\end{theorem}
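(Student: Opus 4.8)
The plan is to reduce both assertions to a single compactness fact about continuous homomorphic images of pseudocompact groups, and then to realize $G$ as a subgroup of a product of well-behaved factors. For the first assertion, Lemma~\ref{Le:Evid} embeds the projectively $\Psi$ group $G$ as a subgroup of a product $P=\prod_\alpha H_\alpha$ of Hausdorff paratopological groups $H_\alpha$ of countable pseudocharacter, and I will show that $P$ is $\C$-complete, so that Proposition~\ref{p1} yields the Hausdorffness of $\C[G]$. For the second assertion I will instead invoke \cite[Theorem~2.7]{Tka09} to embed $G$ into a product $P=\prod_\alpha M_\alpha$ of first-countable Hausdorff paratopological groups, and then transfer the invariants $\w$-balancedness and $Hs\le\w$ from $P$ to $\C[G]$.

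The heart of the matter is the following claim: if $H$ is a Hausdorff paratopological group of countable pseudocharacter and $f\colon X\to H$ is a continuous homomorphism of a pseudocompact topological group $X$, then the extension $\breve f\colon\breve X\to\breve H$ satisfies $\breve f[\breve X]\subseteq H$. To prove it I pass to the group coreflections. Since $X$ is a topological group, $X^\sharp=X$, and the induced homomorphism $f\colon X\to H^\sharp\subseteq\breve H^\sharp=\varrho{H^\sharp}$ is continuous, so $f[X]$ is a pseudocompact subgroup of the Hausdorff topological group $\breve H^\sharp$; as $\psi(H^\sharp)\le\psi(H)\le\w$, the subgroup $f[X]$ is a pseudocompact topological group of countable pseudocharacter. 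A pseudocompact group is precompact, hence $\w$-narrow, and an $\w$-narrow topological group of countable pseudocharacter is second-countable \cite{AT}; thus $f[X]$ is metrizable and pseudocompact, therefore compact. By the Comfort--Ross theorem $\breve X=\varrho{X}$ is compact, and $\breve f^\sharp\colon\breve X\to\breve H^\sharp$ is continuous with $f[X]$ dense in $\breve f^\sharp[\breve X]$; since $f[X]$ is already compact, hence closed in $\breve H^\sharp$, it follows that $\breve f[\breve X]=\breve f^\sharp[\breve X]=f[X]\subseteq H$. I expect this claim to be the main obstacle, the delicate point being the passage to $\breve H^\sharp$ together with the two uses of precompactness and second-countability.

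Granting the claim, the first assertion follows quickly. For a continuous homomorphism $f\colon X\to P$ in $\C$ and each coordinate projection $\pr_\alpha\colon P\to H_\alpha$, the composition $\pr_\alpha\circ f$ is a continuous homomorphism of the pseudocompact group $X$ into $H_\alpha$, so the claim gives $\breve{\pr}_\alpha[\breve f[\breve X]]=\widehat{\pr_\alpha\circ f}[\breve X]\subseteq H_\alpha$. Using that the Ra\u\i kov completion of a product is the product of the completions, so that $\breve P=\prod_\alpha\breve H_\alpha$ and a point of $\breve P$ lies in $P$ exactly when all its coordinates lie in the respective $H_\alpha$, I conclude $\breve f[\breve X]\subseteq\prod_\alpha H_\alpha=P$. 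Hence $P$ is $\C$-complete; being a product of Hausdorff groups it is Hausdorff, and Proposition~\ref{p1} shows that $\C[G]$ is Hausdorff.

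For the second assertion the same computation shows that the product $P=\prod_\alpha M_\alpha$ of first-countable Hausdorff groups is $\C$-complete. Since $\breve G$ is $\sharp$-complete, hence $\C$-complete, and $\breve G\subseteq\breve P$, the intersection $P\cap\breve G$ is a $\C$-complete subgroup of $\breve G$ containing $G$ (the intersection of $\C$-complete groups is $\C$-complete, as in the proof of Proposition~\ref{p1}), whence $\C[G]\subseteq P\cap\breve G\subseteq P$ by the definition of $\C[G]$. It remains to check that $P$ is $\w$-balanced and satisfies $Hs(P)\le\w$, for then the subgroup $\C[G]$ inherits both, together with Hausdorffness. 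Each first-countable paratopological group is $\w$-balanced: given an open $U\ni e$, continuity of every conjugation $y\mapsto xyx^{-1}$ shows that a fixed countable base $\{U_n\}$ at $e$ satisfies $xU_nx^{-1}\subseteq U$ for some $n=n(x)$; and $\w$-balancedness passes to arbitrary products and to subgroups. For the Hausdorff number, each first-countable Hausdorff $M_\alpha$ has $Hs(M_\alpha)\le\w$, and since a basic neighborhood in $P$ depends on finitely many coordinates, combining countably many neighborhoods from each relevant factor yields $Hs(P)\le\w$, an invariant inherited by subgroups. This completes the plan.
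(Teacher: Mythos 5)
Your overall strategy coincides with the paper's: embed $G$ into a product of Hausdorff paratopological groups of countable pseudocharacter via Lemma~\ref{Le:Evid} (resp.\ into a product of first-countable Hausdorff groups via \cite[Theorem~2.7]{Tka09}), show that the product is $\C$-complete, apply Proposition~\ref{p1}, and for the second assertion transfer $\omega$-balancedness and $Hs\le\omega$ to the subgroup $\C[G]$. However, one link in your key claim is false as stated: \emph{an $\omega$-narrow topological group of countable pseudocharacter need not be second-countable}. For a counterexample, take $\mathbb{Z}$ with its Bohr topology (the weakest topology making all homomorphisms $\mathbb{Z}\to\mathbb{T}$ continuous): this group is countable, hence $\omega$-narrow and of countable pseudocharacter, and it is precompact, but it is not metrizable, let alone second-countable. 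What \cite{AT} actually provides (via Guran's theorem) is that such a group admits a continuous \emph{isomorphism onto} a second-countable group, i.e.\ is submetrizable; so your chain, as written, does not yield that $f[X]$ is metrizable.

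The repair is standard and leaves the rest of your argument intact: $f[X]$ is a pseudocompact, submetrizable Tychonoff space, and every pseudocompact submetrizable space is compact and metrizable; alternatively, simply invoke \cite[Proposition~2.3.12]{Tka18}, which states that every pseudocompact topological group of countable pseudocharacter is compact and metrizable --- this is exactly the citation the paper uses. With that fixed, your remaining steps are correct: the continuity of $f\colon X\to H^\sharp$ (since $X^\sharp=X$), the closedness of the compact set $f[X]$ in the Hausdorff group $\varrho(H^\sharp)$, the density argument giving $\breve{f}[\breve{X}]=f[X]\subseteq H$, and the coordinatewise conclusion. Note one further structural difference: the paper avoids your (true, but unproved in your text) identification $\breve{P}=\prod_\alpha\breve{H}_\alpha$ by factoring each composition $\pi_\alpha\circ f$ through the quotient $Z/Z_\alpha$ by its kernel, observing that this quotient is compact metrizable, extending the quotient map over $\breve{Z}$, and assembling the extension into $P$ itself as a diagonal product; it also cites \cite[Propositions~2.1--2.3]{Tka09} for the preservation of $\omega$-balancedness and of $Hs\le\omega$ under products and subgroups, facts which you reprove directly and correctly.
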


\begin{proof} 
Let $G$ be a projectively $\Psi$ paratopological group. By Lemma~\ref{Le:Evid}, $G$ is 
topologically isomorphic to a subgroup of a Tychonoff product $Y=\prod_{\alpha\in A}Y_\alpha$ 
of Hausdorff paratopological groups of countable pseudocharacter. We claim that the paratopological 
group $Y$ is $\C$-complete. Consider any homomorphism $f\colon Z\to Y$ in the class $\C$. By 
the choice of $\C$, $Z$ is a pseudocompact topological group. For every $\alpha\in A$, denote by 
$\pi_\alpha\colon Y\to Y_\alpha$ the natural projection and consider the continuous homomorphism $f_\alpha=\pi_\alpha\circ f\colon Z\to Y_\alpha$. Let $e_\alpha$ be the identity of the group 
$Y_\alpha$. Clearly $Z_\alpha=f_\alpha^{-1}(e_\alpha)$ is a closed invariant subgroup of $Z$
and the quotient topological group $Z/Z_\alpha$ admits a continuous injective homomorphism 
to the group $Y_\alpha$ of countable pseudocharacter. Hence $Z/Z_\alpha$ has countable
pseudocharacter as well and we can apply \cite[Proposition~2.3.12]{Tka18} to conclude that 
$Z/Z_\alpha$ is a compact metrizable group. 
Let $q_\alpha\colon Z\to Z/Z_\alpha$ be the quotient homomorphism and $g_\alpha\colon 
Z/Z_\alpha\to Y_\alpha$ be a unique continuous injective homomorphism such that 
$f_\alpha=g_\alpha\circ q_\alpha$. Since the compact topological group $Z/Z_\alpha$ is 
Ra\u{\i}kov complete, the quotient homomorphism $q_\alpha\colon Z\to Z/Z_\alpha$ admits 
a continuous homomorphic extension $\breve{q}_\alpha\colon \breve{Z}\to Z/Z_\alpha$. Then 
$\breve{f}_\alpha= g_\alpha\circ\breve{q}_\alpha\colon \breve{Z}\to Y_\alpha$ is a continuous 
extension of the homomorphism $f_\alpha$.

The diagonal product of the homomorphisms $\breve{f}_\alpha\colon \breve{Z}\to Y_\alpha$ with 
$\alpha\in A$ is a continuous homomorphism $\breve{f}\colon \breve{Z}\to Y=\prod_{\alpha\in A}Y_\alpha$ extending the homomorphism $f$ and witnessing that the paratopological group $Y$ is $\C$-complete. 
By Proposition~\ref{p1}, the $\C$-completion $\C[X]$ of $X$ is Hausdorff. 

Now assume that the paratopological group $G$ is $\w$-balanced and satisfies $Hs(G)\le\w$. By \cite[Theorem~2.7]{Tka09}, $G$ is a subgroup of a Tychonoff product $Y=\prod_{\alpha\in A}Y_\alpha$ 
of first-countable Hausdorff paratopological groups. Hence $G$ is projectively $\Psi$, by Lemma~\ref{Le:Evid}.
By the above argument, the paratopological group $Y$ is $\C$-complete and, hence, $\C[G]$ can be 
identified with a subgroup of $Y$. By Propositions~2.1--2.3 in \cite{Tka09}, the subgroup $\C[G]$ of the Tychonoff product $Y$ of first-countable Hausdorff paratopological groups is $\w$-balanced and satisfies $Hs(\C[G])\le\w$.
\end{proof}

A topological group $G$ is called {\em precompact} if its Ra\u{\i}kov completion $\rho{G}$ is compact. 
This happens if and only if for any neighborhood $U$ of the identity in $G$, there exists a finite set 
$F\subseteq G$ such that $G=UF=FU$.

\begin{proposition}\label{p:precompact} 
Let $\mathcal C$ be a subclass of the class of continuous homomorphisms from 
precompact topological groups to paratopological groups. A Hausdorff paratopological group 
$G$ is $\mathcal{C}$-complete if and only if for any homomorphism $h\colon X\to G$ in the 
class $\C$, the image $h[X]$ has compact closure in $G$.
\end{proposition}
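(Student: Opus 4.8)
The plan is to prove both implications directly from the definitions, using the hypothesis that every $X$ appearing as a domain of a homomorphism in $\C$ is a precompact topological group, so that its Ra\u\i kov completion $\breve X = \varrho X$ is \emph{compact}.

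For the ``only if'' direction, suppose $G$ is $\C$-complete and let $h\colon X\to G$ be a homomorphism in $\C$. By $\C$-completeness, $h$ extends to a continuous homomorphism $\breve h\colon \breve X\to G$. Since $X$ is precompact, $\breve X$ is compact, so $\breve h[\breve X]$ is a compact subset of $G$. As $X$ is $\sharp$-dense in $\breve X$ and $h[X]\subseteq \breve h[\breve X]$, the closure of $h[X]$ in $G$ is contained in the compact set $\breve h[\breve X]$; a compact subset of the Hausdorff space $G$ is closed, so the closure of $h[X]$ is itself compact. This handles the forward implication.

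For the ``if'' direction, assume that for every $h\colon X\to G$ in $\C$ the image $h[X]$ has compact closure $K=\overline{h[X]}$ in $G$. I want to produce a continuous extension $\breve h\colon \breve X\to G$ of $h$. The natural target is the compact subgroup generated by $K$ inside $G$; more precisely, since $K$ is compact and $G$ is Hausdorff, I would consider $h$ as a map into $K$ (or into a suitable compact subset/subgroup of $G$ containing $h[X]$). The key point is that a continuous homomorphism from $X$ into a \emph{compact} Hausdorff group automatically extends over the Ra\u\i kov completion: compact groups are Ra\u\i kov complete, and by the universal extension property of $\breve X$ recalled in the Introduction (a continuous homomorphism from the $\sharp$-dense subgroup $X$ of $\breve X$ into a $\sharp$-complete group extends uniquely), $h$ lifts to $\breve h\colon \breve X\to G$ with image in the closure of $h[X]$. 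Thus $G$ satisfies the defining extension condition for $\C$-completeness.

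The main obstacle is the ``if'' direction, and it lies in correctly identifying the compact completeness target. The subtlety is that $G$ is only a paratopological group, so I cannot directly invoke Ra\u\i kov completeness of $G$ itself; instead I must pass to a genuine compact (topological) group built from the compact closure $K=\overline{h[X]}$. Concretely, I expect to argue that the image $h[X]$ lies in a compact topological subgroup of $G$ --- using that the closure in $G$ of a continuous homomorphic image of a precompact topological group is compact and, being the image after completion, inherits a topological (not merely paratopological) group structure --- and then apply compactness together with the universal property of $\breve X$ to obtain the extension $\breve h$ landing in $G$. Verifying that this target is a compact \emph{topological} group to which the extension property of $\breve X$ applies is the heart of the matter; once that is in place, the extension $\breve h$ witnesses $\C$-completeness of $G$ and the proof concludes.
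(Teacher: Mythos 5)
Your ``only if'' direction is correct and is essentially the paper's argument: extend $h$ over the compact group $\breve X=\varrho X$, note that $\breve h[\breve X]$ is compact hence closed in the Hausdorff space $G$, and conclude that $\overline{h[X]}$ is a closed subset of a compact set.

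The ``if'' direction, however, has a genuine gap, and you have located it yourself without filling it: you must show that the compact closure $K=\overline{h[X]}$ is a \emph{topological group}, since only then is it Ra\u{\i}kov complete and only then does the universal extension property of $\breve X$ give you $\breve h\colon\breve X\to K\subseteq G$. Your sketched justification --- that $K$, ``being the image after completion, inherits a topological group structure'' --- is circular: the extension $\breve h$ is exactly what you are trying to construct, so you cannot use its image to argue anything about $K$. Nor can you take for granted that closures of subgroups are subgroups: in paratopological groups this fails badly, as the paper's own Example~\ref{Ex:old} shows (there the closure of the subgroup $\Sigma\times\{0\}$ is $\Sigma\times(-\omega)$, which is not a group); likewise ``the compact subgroup generated by $K$'' need not exist. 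The missing ingredient is Numakura's theorem. The paper's route is: $K$ is a compact Hausdorff topological \emph{semigroup} (the multiplication of $G$ is jointly continuous and $K\cdot K\subseteq\overline{h[X]\,h[X]}=K$, by \cite[Proposition~1.4.10]{AT}); $K$ is cancellative, being a subsemigroup of a group; and by Numakura's theorem \cite{Num} a compact Hausdorff cancellative topological semigroup is a topological group. Once $K$ is known to be a compact topological group, your appeal to Ra\u{\i}kov completeness and the extension property of $\breve X$ goes through exactly as you describe. So your outline of the endgame is right, but the ``heart of the matter'' you flagged is precisely the step that requires a theorem you did not invoke, and without it the proof is incomplete.
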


\begin{proof} 
\emph{Sufficiency.} Take any homomorphism $h\colon X\to G$ in the class $\C$ and assume that the 
image $h[X]$ has compact closure $\overline{h[X]}$ in $G$. It follows from \cite[Proposition~1.4.10]{AT}  
that $H=\overline{h[X]}$ is a Hausdorff compact topological semigroup and $H$, being a subsemigroup 
of a group, is cancellative. By Numakura's theorem (see \cite{Num} or \cite[Theorem~2.5.2]{AT}), 
$H$ is a compact topological group. Since $H$ is Ra\u{\i}kov complete, the homomorphism 
$h\colon X\to H$ has a continuous homomorphic extension $\breve{h}\colon\breve{X}\to H\subseteq G$, witnessing that the paratopological group $G$ is $\C$-complete.\smallskip

\emph{Necessity.} Assume that the paratopological group $G$ is $\C$-complete. Then every homomorphism 
$h\colon X\to G$ in the class $\C$ has a continuous homomorphic extension $\breve{h}\colon\breve{X}\to G$ 
to the Ra\u{\i}kov completion $\breve{X}=\rho{X}$ of the topological group $X$. The precompactness of $X$ 
guarantees that the topological group $\breve{X}$ is compact and so is its image $\breve{h}[\breve{X}]$ 
in $G$. Since the space $G$ is Hausdorff, the compact subspace $\breve{h}[\breve{X}]$ is closed in $G$ 
and  the closure $\overline{h[X]}$ of $h[X]$ in $G$, being a closed subset of the compact space 
$\breve{h}[\breve{X}]$, is compact.   
\end{proof}

A subset $F$ of a topological space $X$ is called \emph{functionally closed} (or else a 
\emph{zero-set}) if $F=f^{-1}(0)$ for some continuous function $f\colon X\to \IR$. A paratopological 
group $G$ is said to be \emph{simply $sm$-factorizable} if for every functionally closed set $A$ in $G$, 
there exists a continuous homomorphism $h\colon G\to  H$ onto a separable metrizable paratopological 
group $H$ such that $A=h^{-1}[B]$, for some closed set $B\subseteq H$ (see \cite[Definition~5.6]{AT}). 

A subspace $X$ of a topological space $Y$ is called \emph{$C$-embedded} in $Y$ if each 
continuous real-valued function on $X$ has a continuous extension over $Y$.

\begin{proposition} \label{Prop:3.5}
Every regular simply $sm$-factorizable paratopological group $G$ is $C$-embedded in its 
$\mathcal{C}$-completion $\mathcal{C}[G]$ provided $\mathcal{C}$ is a subclass of the class 
of continuous homomorphisms of pseudocompact topological groups to paratopological groups.
\end{proposition}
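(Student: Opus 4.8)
The plan is to reduce the extension problem to a separable metrizable quotient of $G$, on which the relevant completion turns out to be trivial. Fix a continuous function $f\colon G\to\IR$. First I would use simple $sm$-factorizability to factor $f$ through a separable metrizable paratopological group. For every rational $q$ the sets $f^{-1}((-\infty,q])$ and $f^{-1}([q,\infty))$ are functionally closed, being the zero-sets of $\max(f-q,0)$ and $\max(q-f,0)$ respectively, so each is of the form $h'^{-1}[B]$ for a continuous homomorphism $h'$ of $G$ onto a separable metrizable group and a closed set $B$. Taking the diagonal product of this countable family yields a continuous homomorphism $h\colon G\to H$ onto a separable metrizable paratopological group $H$ (a subgroup of a countable product of separable metrizable groups, hence separable metrizable).

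I would then check that $f$ factors as $f=g\circ h$ with $g\colon H\to\IR$ continuous. The factorization holds set-theoretically because $h(x)=h(y)$ forces $f(x)\le q\iff f(y)\le q$ for every rational $q$, and hence $f(x)=f(y)$. For continuity, surjectivity of $h$ lets one transport the defining closed sets: for each rational $q$ the set $g^{-1}((-\infty,q])$ equals the closed subset of $H$ obtained by pulling the corresponding $B$ back along the relevant coordinate projection, and likewise for $[q,\infty)$. Since the complements of these sets are open and the intervals with rational endpoints form a base of $\IR$, the map $g$ is continuous. The continuity of $g$ is the main delicacy here, which is precisely why one records the preimages of the half-lines rather than attempting to work with $f$ directly.

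Next I would show that every separable metrizable paratopological group $H$ is $\C$-complete, so that $\C[H]=H$. Since pseudocompact topological groups are precompact, $\C$ is a subclass of the homomorphisms from precompact groups and Proposition~\ref{p:precompact} applies. Any $h'\colon Z\to H$ in $\C$ has pseudocompact topological domain $Z$; because $Z$ is a topological group, $h'$ is automatically continuous as a map into the group coreflection $H^\sharp$, which is a first-countable and hence metrizable topological group. Thus $h'[Z]$ is a pseudocompact subgroup of a metrizable group, therefore compact, and being compact it is closed in the Hausdorff space $H$; so $\overline{h'[Z]}=h'[Z]$ is compact in $H$. By Proposition~\ref{p:precompact}, $H$ is $\C$-complete, whence $\C[H]=H$.

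Finally I would push $\C[G]$ into $H$ and assemble the extension. Extend $h$ to a continuous homomorphism $\breve h\colon\breve G\to\breve H$; regularity of $G$ guarantees that $\breve G$ and $\breve H$ are regular, hence Hausdorff. Using $\C[H]=H$, I would verify that $\breve h^{-1}[H]$ is a $\C$-complete subgroup of $\breve G$ containing $G$: for any $\psi\colon Z\to\breve h^{-1}[H]$ in $\C$, the set $(\breve h\circ\psi)[Z]$ is, by the previous paragraph, a compact subgroup of $\breve H$ contained in $H$, so $(\breve h\circ\breve\psi)[\breve Z]=\overline{(\breve h\circ\psi)[Z]}^{\breve H}\subseteq H$, giving $\breve\psi[\breve Z]\subseteq\breve h^{-1}[H]$; equivalently, one may invoke the functoriality of Proposition~\ref{Prop:1}. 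Minimality of $\C[G]$ then forces $\breve h[\C[G]]\subseteq H$, so $\tilde f:=g\circ\breve h|_{\C[G]}\colon\C[G]\to\IR$ is well defined and continuous, and $\tilde f|_G=g\circ h=f$, exhibiting the required extension.
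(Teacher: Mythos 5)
Your proof is correct, but it takes a genuinely different route from the paper's. The paper's argument is global and very short: $G$ is Tychonoff by \cite[Corollary~5]{BRT}; by \cite[Theorem~4.3]{XT} the realcompactification $\upsilon{G}$ carries the structure of a paratopological group containing $G$ as a dense subgroup; since closures of pseudocompact subspaces of $\upsilon{G}$ are compact and pseudocompact topological groups are precompact, $\upsilon{G}$ is $\C$-complete by Proposition~\ref{p:precompact}; hence $G\subseteq\C[G]\subseteq\upsilon{G}$, and the $C$-embedding of $G$ in $\upsilon{G}$ passes to $\C[G]$, handling all real-valued functions at once. You instead work one function $f$ at a time: you factor $f$ through a separable metrizable quotient $H$ directly from the definition of simple $sm$-factorizability (a countable diagonal over rational half-lines), prove the reusable lemma that every separable metrizable---in fact every first-countable Hausdorff---paratopological group is $\C$-complete (via metrizability of the coreflection $H^\sharp$, automatic continuity of homomorphisms from topological groups into $H^\sharp$, and compactness of pseudocompact subsets of metrizable spaces), and then trap $\C[G]$ inside $\breve{h}^{-1}[H]$. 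Both proofs pivot on Proposition~\ref{p:precompact}; the paper's buys brevity and the stronger conclusion $\C[G]\subseteq\upsilon{G}$ at the price of the nontrivial external result \cite[Theorem~4.3]{XT}, while yours is essentially self-contained modulo standard facts (Birkhoff--Kakutani, Comfort--Ross). Two small repairs: the parenthetical appeal to Proposition~\ref{Prop:1} is not available as stated, since $\C$ here is an arbitrary subclass and need not be composable---but your direct verification that $\breve{h}^{-1}[H]$ is $\C$-complete does not require it; and where you write that regularity of $G$ makes $\breve{H}$ regular, what you actually need (and have) is regularity of the metrizable group $H$, which gives that $\breve{H}$ is regular, hence Hausdorff, so that compact subsets of $\breve{H}$ are closed.
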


\begin{proof} 
The space $G$ is Tychonoff, by \cite[Corollary~5]{BRT}. We apply \cite[Theorem~4.3]{XT} 
according to which the realcompactification $\upsilon{G}$ of the space $G$ admits the structure 
of paratopological group containing $G$ as a dense paratopological subgroup. Since the closure 
of every pseudocompact subspace in $\upsilon{G}$ is compact and every pseudocompact 
topological group is precompact, the paratopological group $\upsilon{G}$ is $\C$-complete 
according to Proposition~\ref{p:precompact}. Then $G\subseteq \C[G]\subseteq \upsilon{G}$, 
which implies that $G$ is $C$-embedded in $\C[G]$, being $C$-embedded in $\upsilon{G}$.
\end{proof}

Now we present a necessary condition on a paratopological group $G$ for the Hausdorffness 
of its $\C$-semicompletion $\mathcal{C}[G)$. In view of the inclusion $\mathcal{C}[G)\subseteq
\mathcal{C}[G]$, the same condition is necessary for the Hausdorffness of $\mathcal{C}[G]$.

\begin{proposition} 
Let $\mathcal{C}$ be a subclass of the class of continuous homomorphisms from precompact 
topological groups to paratopological groups. If the $\C$-semicompletion $\C[G)$ of $G$ is 
Hausdorff, then for any homomorphism $h\colon X\to G$ in $\C$, the closure of $h[X]$ in $G$ 
is a precompact topological group.
\end{proposition}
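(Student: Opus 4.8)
The plan is to exploit the defining extension property of the $\C$-semicompletion $\C[G)$: I would lift the given homomorphism to the compact Ra\u\i kov completion of its domain, recognize the resulting compact group inside $\C[G)$ via the Numakura-type argument already used in Proposition~\ref{p:precompact}, and only at the very end intersect it back with $G$.

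First I would fix a homomorphism $h\colon X\to G$ in $\C$. By the hypothesis on $\C$, the group $X$ is a precompact topological group, so its Ra\u\i kov completion $\breve X=\varrho X$ is compact. Since $\C[G)$ is a $\C$-semicompletion of $G$, the homomorphism $h$ extends to a unique continuous homomorphism $\breve h\colon\breve X\to\C[G)$. Put $K=\breve h[\breve X]$. Being the continuous image of the compact group $\breve X$, the set $K$ is compact, and being the homomorphic image of a group under a homomorphism, $K$ is a subgroup of $\C[G)$.

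Next I would invoke the assumed Hausdorffness of $\C[G)$. Because $K$ is a compact subset of the Hausdorff space $\C[G)$, it is closed in $\C[G)$, and in its subspace topology it is a compact Hausdorff paratopological group. Exactly as in the proof of Proposition~\ref{p:precompact} (through \cite[Proposition~1.4.10]{AT} and Numakura's theorem \cite{Num}), the group $K$ is then in fact a compact topological group. Since $X$ is dense in $\breve X$, its image $\breve h[X]=h[X]$ is dense in $\breve h[\breve X]=K$; as $K$ is closed in $\C[G)$, this shows that $K=\overline{h[X]}$, where the closure is taken in $\C[G)$. Finally, because $G$ carries the topology inherited from $\C[G)$, the closure of $h[X]$ in $G$ equals $K\cap G$. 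This intersection is a subgroup of the compact topological group $K$, hence a topological group in the subspace topology, and a subgroup of a compact topological group is precompact, since its closure in $K$ is a compact group that serves as its Ra\u\i kov completion. Therefore $\overline{h[X]}=K\cap G$ is a precompact topological group, as claimed.

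I expect the only real subtlety, rather than a genuine obstacle, to be keeping the two ambient groups apart: the extension $\breve h$ need not take values inside $G$, so one cannot form the relevant closure directly in $G$. The whole argument must be run in $\C[G)$, where both the compactness of $K$ and the Hausdorff hypothesis are available, and the passage to $G$ happens only at the last step through the intersection $K\cap G$. The Hausdorffness is used essentially twice: once to guarantee that $K$ is closed, so that it coincides with the closure of $h[X]$, and once to promote the compact paratopological group $K$ to a genuine topological group via the Numakura-type argument.
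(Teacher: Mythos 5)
Your proof is correct and follows essentially the same route as the paper's: extend $h$ to $\breve h\colon\breve X\to\C[G)$, use compactness of $\breve X=\varrho X$ and the Hausdorffness of $\C[G)$ to see that $K=\breve h[\breve X]$ is a closed compact topological group equal to the closure of $h[X]$ in $\C[G)$, and then identify the closure of $h[X]$ in $G$ with the precompact topological group $K\cap G$. The only (harmless) deviation is in one step: where you promote the compact Hausdorff paratopological group $K$ to a topological group via Numakura's theorem as in Proposition~\ref{p:precompact}, the paper instead observes that $K$ is topologically isomorphic to the quotient group $\breve X/\ker\breve h$, since a continuous bijective homomorphism from a compact group onto a Hausdorff group is a topological isomorphism.
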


\begin{proof} 
Let $h\colon X\to G$ be any homomorphism in the class $\C$. By the definition of a $\C$-semicompletion, 
the homomorphism $h$ has a continuous extension $\breve{h}\colon\breve{X}\to \C[G)$. The precompactness of $X$ ensures that $\breve{X}=\rho{X}$ is a compact topological group. Assume that the paratopological 
group $\C[G)$ is Hausdorff. Then the compact subspace $\breve{h}[\breve{X}]$ of $\C[G)$ is closed. It 
follows that $\breve{h}[\breve{X}]$ is a compact topological group that is topologically isomorphic to a 
quotient group of the compact topological group $\breve{X}$. Then the closure of $h[X]$ in $G$ is a 
precompact topological group, which is equal to the intersection $G\cap \breve{h}[\breve{X}]$ of the 
group $G$ with the compact topological group $\breve{h}[\breve{X}]\subseteq\C[G)$.  
\end{proof} 

Finally we present an example of a Hausdorff paratopological group $G$ with a non-Hausdorff $\C$-semicompletion, where $\C$ is the class of all continuous homomorphisms from sequentially compact topological groups to paratopological groups. As usual, a space $X$ is \emph{sequentially compact} 
if each sequence in $X$ contains a convergent subsequence. Also, $X$ is said to be \emph{$\omega$-bounded} if the closure in $X$ of every countable set is compact. 

\begin{example}\label{Ex:old} 
Let $\mathcal{C}$ be the class of continuous homomorphisms from sequentially compact topological 
groups to paratopological groups. There exists a Hausdorff paratopological abelian group $G$ whose 
$\mathcal{C}$-semicompletion $\mathcal{C}[G)$ fails to be a $T_1$-space. In addition, $G$ contains 
a subgroup $H$ such that $H$ is a sequentially compact $\omega$-bounded topological group but its 
closure in $G$ is not a group.
\end{example}

\begin{proof}
Let $\mathbb T=\{z\in\mathbb C: |z|=1\}$ be the torus group with its usual topology and multiplication 
inherited from the complex plane. So the identity of $\mathbb T$ is $1$. Denote by $\Sigma$ the 
subgroup of the Tychonoff product $\mathbb T^{\omega_1}$ defined as follows:
$$
\Sigma= \{x\in \mathbb T^{\omega_1}:  |\{\alpha\in\omega_1: x_\alpha\neq 1\}|\leq\omega\}.
$$
According to Corollaries~1.6.33 and~1.6.34 of \cite{AT}, the space $\Sigma$ is Fr\'echet-Urysohn 
and $\omega$-bounded. Hence $\Sigma$ is sequentially compact.

Take an element $c\in\mathbb T^{\omega_1}$ of infinite order such that $\langle c\rangle\cap\Sigma=\{e\}$, where $e$ is the identity element of $\mathbb T^{\omega_1}$ and $\langle c\rangle$ is the cyclic group generated by $c$. For every open neighborhood $U$ of $e$ in $\mathbb T^{\omega_1}$, we define a subset $O_U$ of $G=\Sigma\times\mathbb Z$ by letting
$$
O_U= \bigcup_{n\in\omega} \big(c^n U\cap \Sigma\big)\times\{n\}.
$$
Here $\omega$ is identified with the subset $\{0,1,2,\ldots\}$ of $\mathbb Z$. A routine verification 
shows that the sets $O_U$ with $U$ as above, constitute a base at the identity $(e,0)\in G$ for a 
Hausdorff paratopological group topology on $G$. 
	
It turns out that the subgroup $H=\Sigma\times\{0\}$ of $G$ has the required properties. Let us show that 
the closure of $H$ of $G$ is the set $\Sigma\times (-\w)$, where $-\w=\{0,-1,-2,\ldots\}$, so this closure is 
not a subgroup of $G$. First, we claim that any element $g=(x,k)\in G$ with $k>0$ is not in the closure of 
$H$.  Indeed, let $U$ be an arbitrary open neighborhood of $e$ in $\mathbb T^{\omega_1}$. Then the set 
$g O_U$ is an open neighborhood of $g$ in $G$ disjoint from $H$. 
	
Further, consider an element $g=(x,-k)\in G$, where $x\in\Sigma$ and $k\in\omega$. If $k=0$, then $g\in\Sigma\times\{0\}=H$. Assume that $k>0$ and take a basic open neighborhood $O_U$ of the 
identity in $G$, where $U$ is an open neighborhood of $e$ in $\mathbb T^{\omega_1}$. Then 
$$
g\cdot O_U= g\cdot\bigcup_{n\in\omega} \big((c^nU\cap\Sigma)\times\{n\}\big) =
\bigcup_{n\in\omega} (xc^nU\cap\Sigma)\times \{n-k\}.
$$
Therefore, $g\cdot O_U\cap H= g\cdot O_U\cap (\Sigma\times\{0\})= 
(xc^k U\cap\Sigma)\times \{k\}\neq\emptyset$. Hence $g$ is in the closure 
of $H$. We have thus shown that the closure of $H$ in $G$ is the asymmetric
subset $\Sigma\times (-\w)$ of $G$, which implies the second claim of the example.

The definition of the topology of the paratopological group $G$ implies that the topology of the group coreflection $G^\sharp$ coincides with the product topology on $\Sigma\times \IZ$ and then the Ra\u{\i}kov completion $\rho{G}^\sharp$ can be identified with the product $\IT^{\w_1}\times\IZ$, where $\IZ$ is endowed 
with the discrete topology. By \cite[\S2.2]{BR}, a neighborhood base for the topology of the Ra\u{\i}kov completion $\breve{G}$ at its identity consists of the closures $\overline{O_U}$ of the basic neighborhoods $O_U$ in the product space $\IT^{\w_1}\times\IZ$. It is easy to see that for any neighborhood $U$ of $e$ 
in $\IT^{\w_1}$, the closure $\overline{O_U}$ of $O_U=\bigcup_{n\in\w}(c^nU\cap\Sigma)\times\{n\}$ coincides with the set $\bigcup_{n\in\w}(c^n\overline{U})\times\{n\}$. We see, therefore, that each neighborhood of the identity in $\breve{G}$ contains the point $(c,1)$, which means that the Ra\u{\i}kov completion $\breve{G}$ of $G$ does not satisfy the $T_1$ separation axiom.

It remains to prove that $\breve{G}$ is the $\C$-semicompletion of $G$. Since the subgroup $\Sigma$ of 
$\IT^{\w_1}$ is sequentially compact, the continuous homomorphism 
$$
h\colon \Sigma\to \Sigma\times\{0\}\subseteq G,\quad h: x\mapsto \langle x,0\rangle,
$$ 
extends to a continuous homomorphism $\bar h\colon \IT^{\w_1}\to \C[G)\subseteq\breve{G}$. Since 
$\IT^{\w_1}$ is a topological group, the homomorphism $\bar h$ remains continuous with  respect to the topology of the Ra\u{\i}kov completion $\rho{G}^\sharp=\IT^{\w_1}\times\IZ$ of the topological group $G^\sharp$. Now the compactness of $\IT^{\w_1}$ implies that $\IT^{\w_1}\times\{0\}=\bar h[\IT^{\w_1}]\subseteq \C[G)$ and, hence, $\C[G)=\breve{G}$.
\end{proof}

\end{document}